\newcounter{commentcounter}
\renewcommand*{\backref}[1]{}
\renewcommand*{\backrefalt}[4]
{
    \ifcase #1
        No citation in the text.
    \or
        Cited on Page #2.
    \else
        Cited on Pages #2.
    \fi
}
\newtheorem{thm}{Theorem}[section]
\newtheorem{lemma}[thm]{Lemma}
\newtheorem{corollary}[thm]{Corollary}
\newtheorem{prop}[thm]{Proposition}
\newtheorem{thmx}{Theorem}
\theoremstyle{definition}
\newtheorem{remark}[thm]{Remark}
\newtheorem{quest}[thm]{Question}
\theoremstyle{plain}
    \newtheoremstyle{TheoremNum}
        {8.0pt plus 2.0pt minus 4.0pt}{8.0pt plus 2.0pt minus 4.0pt} 
        {\itshape} 
        {-0.15cm} 
        {\bfseries} 
        {.} 
        { }  
        {\thmname{#1}\thmnote{ \bfseries #3}}
    \theoremstyle{TheoremNum}
\newcommand*{\claimproofname}{My proof}
\DeclareMathOperator{\Aut}{\mathrm{Aut}}
\newcommand{\SL}{\mathrm{SL}}
\newcommand{\PSL}{\mathrm{PSL}}
\newcommand{\SU}{\mathrm{SU}}
\newcommand{\SO}{\mathrm{SO}}
\newcommand{\He}{\mathrm{He}}
\newcommand{\onto}{\twoheadrightarrow}
\def\Z{\mathbb{Z}}
\newcommand{\NN}{\mathbb{N}}
\newcommand{\ZZ}{\mathbb{Z}}
\newcommand{\CC}{\mathbb{C}}
\newcommand{\FF}{\mathbb{F}}
\tikzstyle{blackNode}=[fill=black, draw=black, shape=circle]
\DeclareMathOperator{\spin}{spin}
\newcommand{\cs}{\mathbin{\#}}
\newcommand{\spincs}{\mathfrak{s}}
\newcommand{\cptwo}{\CC P^2}
\newcommand{\interior}{\textup{int}}
\title[Simple groups and surface complements]{Simple groups and complements of smooth surfaces in simply connected $4$-manifolds}
\author{Sam Hughes}
\address[S.~Hughes]{Mathematisches Institut, Rheinische Friedrich-Wilhelms-Universit\"at Bonn, Endenicher Allee 60, 53115 Bonn, Germany}
\email{sam.hughes.maths@gmail.com}
\author{Daniel Ruberman}
\address[D.~Ruberman]{Department of Mathematics, MS 050, Brandeis University, Waltham, MA 02454-9110, USA}
\email{ruberman@brandeis.edu}
\date{\today}
\subjclass[2020]{57R40 (primary); 20E32, 20J06 (secondary)}
\begin{document}

\begin{abstract}
    For each integer $n$ we construct an oriented closed simply connected $4$-manifold $X$ admitting a smoothly embedded closed connected surface $\Sigma$ of self-intersection number $n$ such that the complement of the surface has non-trivial fundamental group.  This answers a question of Kronheimer in Kirby's 1997 problem list. The proof combines a topological construction with homological properties of simple groups such as Thompson's group $V$ and certain sporadic finite simple groups.
\end{abstract}

\maketitle

\section{Introduction}\label{sec:intro}
A basic invariant of an embedding of a surface $\Sigma$ in a $4$-manifold $X$ is the fundamental group of its complement, and it is reasonable to ask what groups can occur, especially if $X$ is simply connected. With no further restrictions on the embedding, there is a simple characterisation of such groups~\cite{kim-ruberman:complement}: A finitely presented group $G$ is $\pi_1(X-\Sigma)$ for a connected surface $\Sigma$ in some $X$ (not specified in advance) if and only if $H_1(G)$ is cyclic and $G$ is the normal closure of a single element; a question of Wiegold~\cite[Problem 5.52]{khukhro2023unsolved} asks if the second condition follows automatically from the first.

The construction in~\cite{kim-ruberman:complement} provides a surface (in fact a symplectically embedded surface in a symplectic $4$-manifold) with self-intersection $0$, but the problem is more challenging if one requires that the self-intersection is non-zero.  Indeed, a question of Kronheimer in Kirby's 1997 problem list~\cite[Problem 4.109]{kirby:problems96} asks, for simply connected $X$, whether $\pi_{1}(X-\Sigma)$ is trivial if $\Sigma$ has non-zero self-intersection that is square-free (all prime divisors appear once). The motivation is an observation of Kronheimer--Mrowka~\cite[Proposition 5.7]{kronheimer-mrowka:I} that in this situation, $\pi_{1}(X-\Sigma)$ would have no non-trivial representations in $\SO(3)$ or $\SU(2)$.  This fact suggested that possible fundamental groups of surface complements might be found among groups without unitary representations, and led to the consideration of Thompson's group $V$, discussed in \Cref{sec.V}.

In this note, we construct surfaces with non-simply connected complements and provide a negative answer to Kronheimer's question. The result applies to all non-zero self-intersections, not just square-free ones.

\begin{thmx}
    \label{thmx.main}
    For any $n\in\Z$, there is an oriented closed simply connected $4$-manifold $X$ and surface $\Sigma$ smoothly embedded in $X$ such that $\Sigma \cdot \Sigma = n$ and $\pi_1(X-\Sigma) \neq \{1\}$.   If $n$ is even we may take $X$ to be spin.
\end{thmx}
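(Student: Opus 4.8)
The plan is to separate the problem into a purely group-theoretic step and a topological realisation step, the bridge being the fundamental group $G$ of the complement together with its distinguished meridian class. First I would fix the target: I want $\pi_1(X-\Sigma)$ to be a nontrivial finitely presented group $G$ in which the meridian $t$ of $\Sigma$ is a normal generator. Normal generation is exactly the condition that gluing the normal disk bundle back in kills $\pi_1$, so that $\pi_1(X)=G/\langle\langle t\rangle\rangle=\{1\}$; it also forces $H_1(X-\Sigma)=G^{\mathrm{ab}}$ to be cyclic, as it must be by the criterion of Kim--Ruberman. Taking $G$ simple makes both requirements automatic, since every nontrivial element of a nonabelian simple group normally generates it.

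The self-intersection $n$ enters through the boundary of a tubular neighbourhood. If $\nu$ denotes the disk bundle over $\Sigma_g$ of Euler number $n$, then $\partial\nu=S_n$ is the circle bundle of Euler number $n$, whose fundamental group is the central extension $\langle a_i,b_i,h \mid [a_i,h],[b_i,h],\prod_i[a_i,b_i]h^{-n}\rangle$, with the fibre $h$ the meridian. A complement with $\pi_1=G$ and meridian $t$ is thus constrained by a homomorphism $\rho\colon\pi_1(S_n)\to G$ with $\rho(h)=t$; the defining relations force $t$ to commute with the images $\rho(a_i),\rho(b_i)$ and force $\prod_i[\rho(a_i),\rho(b_i)]=t^{n}$. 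The decisive algebraic point is therefore to realise $t^{n}$ as a product of commutators of elements of the centraliser $C_G(t)$. I would do this uniformly in $n$, and with genus one, by choosing the meridian to be a central commutator: pick $a,b\in G$ with $t:=[a,b]\neq 1$ lying in the centre of $\langle a,b\rangle$. Then $a,b\in C_G(t)$, and the identity $[a^{n},b]=[a,b]^{n}=t^{n}$, valid whenever $[a,b]$ is central, exhibits $t^{n}$ as a single commutator in $C_G(t)$. Such $a,b$ exist as soon as $G$ contains a non-abelian nilpotent subgroup of class two, for instance a finite Heisenberg (extraspecial) subgroup.

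Given this data I would build $X=\nu\cup_{S_n}C$ by constructing the complement $C$ with $\partial C=S_n$ and $\pi_1(C)=G$ inducing $\rho$: start from $S_n\times[0,1]$, attach $1$-handles to introduce the generators of $G$ outside the image of $\rho$, attach $2$-handles to impose the relations of $G$ together with those defining $\rho$, and cap off the free boundary with $3$- and $4$-handles. Here the homological hypothesis $H_2(G)=0$ is what makes the construction close up: with trivial Schur multiplier the $2$-handle relators carry no residual $2$-cycles, the top boundary is standard (a connected sum of copies of $S^1\times S^2$, after stabilising $X$ by a few copies of $S^2\times S^2$ or $\pm\cptwo$ if necessary), and no extra relations appear, so that $\pi_1(X-\Sigma)=\pi_1(C)=G$ exactly while $\pi_1(X)=G/\langle\langle t\rangle\rangle=\{1\}$. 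By construction $\Sigma=\Sigma_g$ is smoothly embedded with $\Sigma\cdot\Sigma=n$.

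It remains to exhibit a group meeting all of these hypotheses, and this is where the simple groups enter. Thompson's group $V$ is finitely presented, simple (hence perfect and normally generated by any nontrivial element), and acyclic, so in particular $H_2(V)=0$; moreover $V$ contains every finite group, hence an extraspecial subgroup furnishing the pair $a,b$ above. A single such $V$ therefore settles all non-zero $n$ at once. For a finite example one may instead take a sporadic simple group with trivial Schur multiplier (giving $H_2(G)=0$) that contains a suitable extraspecial subgroup. I expect the main obstacle to be the realisation step rather than the algebra: verifying that the complement genuinely closes up to a closed simply connected smooth $4$-manifold with the meridian relation intact---so that $\pi_1(X-\Sigma)$ equals $G$ on the nose and the self-intersection is exactly $n$---is precisely what forces, and is rescued by, the vanishing of $H_2(G)$.
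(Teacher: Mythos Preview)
Your overall strategy---pick a finitely presented simple group $G$, build a homomorphism $\rho$ from the circle-bundle group sending the fibre class nontrivially, and then realise the exterior as a compact $4$-manifold $C$ with $\pi_1(C)=G$---is exactly the paper's, and your construction of $\rho$ through a class-two nilpotent (extraspecial) subgroup is essentially what the paper does in the self-intersection $\pm 1$ case. The genuine gap is in the realisation step: the obstruction to filling $(S_n,\rho)$ by a $4$-manifold over $BG$ is not $H_2(G)=0$ but the vanishing of $\rho_*[S_n]$ in $\Omega_3(BG)\cong H_3(G;\Z)$. Your handle recipe---attach $1$- and $2$-handles to $S_n\times[0,1]$ and then cap the free end with $3$- and $4$-handles---needs that free boundary to be $\#_k(S^1\times S^2)$, and trivial Schur multiplier does not force this (nor does stabilising by $S^2\times S^2$ or $\pm\cptwo$, which leaves the bordism class untouched). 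Whatever you build, the free end is bordant over $BG$ to $(S_n,\rho)$, so you have not moved the obstruction at all. The paper instead invokes $\Omega_3(BG)\cong H_3(G;\Z)$ to obtain \emph{some} null-bordism $W_0$ once the $H_3$-class vanishes, and then performs surgery on circles in $W_0$ to make $\pi_1$ equal to $G$ on the nose; no hypothesis on $H_2(G)$ enters.

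For Thompson's $V$ your error is invisible, since $V$ is acyclic and in particular $H_3(V)=0$, so every $(S_n,\rho)$ bounds. But for finite simple groups your stated criterion is wrong: a sporadic group with trivial Schur multiplier can have nonzero $H_3$ (for instance $H_2(Co_3;\Z)=0$ while $H_3(Co_3;\Z)\cong\Z/6$), and the paper must choose the extraspecial $p$-subgroup with $p$ coprime to $|H_3(G;\Z)|$ so that the image of $H_3(\mathrm{He}_3(p);\Z)$ in $H_3(G;\Z)$ is zero. Your Heisenberg idea meshes perfectly with this once you select the prime correctly, but the homological condition you must verify lives in degree three, not two.
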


\begin{remark}\label{orient} We note here some topological preliminaries.
\begin{enumerate}[label=(\roman*)]
    \item All surfaces and 4-manifolds discussed in the paper will be assumed compact and connected. 
    \item The sign of the self-intersection $\Sigma\cdot\Sigma$  depends on a choice of orientation for $X$ which we make implicitly throughout.  Note that by reversing the orientation of $X$, the statement of Theorem~\ref{thmx.main} for $n$ is equivalent to the same statement for $-n$.
    \item The condition that the self-intersection of $\Sigma\subset X$ be square-free implies that the homology class $[\Sigma]\in H_2(X)$ is primitive. More generally, if $[\Sigma] =dA$ where $A$ is a primitive class in $H_2(X;\Z)$, then we call $d$ the multiplicity of $\Sigma$. As in ~\cite[Lemma 3.1]{hsiang-szczarba}, if $[\Sigma]$ has multiplicity $d$, then  $H_1(X-\Sigma)
    \cong \Z/d$ and $\Sigma \cdot \Sigma = d^2 A\cdot A$.
\end{enumerate}
    
\end{remark}

\subsection*{Acknowledgements}
This work has received funding from the European Research Council (ERC) under the European Union's Horizon 2020 research and innovation programme (Grant agreement No. 850930). The second author was partially supported by NSF grant DMS-1952790.  The first author thanks Ian Leary for helpful correspondence, and the second author thanks Peter Kronheimer for the reminder about his question.  Both authors thank the referees for the helpful comments which improved the paper.

\section{From group theory to surface complements}\label{sec.2}
Let $\Sigma$ be a closed orientable surface of genus at least one.  Choose an orientation for $\Sigma$, and 
let $N$ be the disk bundle over $\Sigma$ of Euler class $n$ with orientable total space. Its boundary, a circle bundle over $\Sigma$, will be denoted by $Y$. We note that for an embedding of $\Sigma$ with normal bundle $N$, the complement $X -\Sigma$ of $\Sigma$ deformation retracts onto the exterior of $\Sigma$, defined as $X - \interior(N)$; it is more convenient henceforth to work with the exterior.  The idea of the proof of \Cref{thmx.main} is to find a suitable manifold $W$ with $\partial W = Y$ to serve as the exterior of $\Sigma$ in $X$. In fact we will take $X$ to be $N \cup W$, and the goal is to choose $W$ so that $X$ is simply connected. 

To this end, we make use of the oriented bordism group $\Omega_k(BG)$, where $BG$ is the classifying space for the group $G$; see~\cite{atiyah:bordism-cobordism}, where $\Omega_k(\cdot)$ is denoted $\operatorname{MSO}_k(\cdot)$. The addition in this group is given by disjoint union. The elements of this group are equivalence classes of pairs $(M,f\colon M \to BG)$, where $M$ is a compact oriented $k$-manifold, and $(M,f)$ and $(M',f')$ are equivalent if they are cobordant via an oriented manifold $V$ and and a map $F\colon  V \to BG$ extending $f$ and $f'$. Note that if $M$ is a connected oriented closed $k$-manifold, a homomorphism $\varphi\colon \pi_1(M) \to G$ is induced by a map $M \to BG$, well-defined up to homotopy. The bordism class of this map will be denoted $[M,\varphi]\in \Omega_k(BG)
$; if $M$ is disconnected, then $\varphi$ would be a collection of homomorphisms of the fundamental groups of its components, and one simply adds the bordism elements. If there is an oriented cobordism $V$ between $M$ and $M'$ and a homomorphism from $\pi_1(V) \to G$ restricting to $\varphi$ and $\varphi'$ on $M$ and $M'$, respectively, then $[M,\varphi] = [M',\varphi']$.
 One defines, similarly, the spin bordism group $\Omega^{\spin}_k(BG)$ as spin cobordism classes of triples $(M,\spincs,f)$ where $\spincs$ is a spin structure on $M$ and $f\colon M \to BG$ as above.
 
The following lemma is a straightforward consequence of the Atiyah-Hirzebruch spectral sequences
\[E^2_{p,q}=H_p(X;\Omega_q(pt))\quad \text{and} \quad E^2_{p,q}=H_p(X;\Omega_q^{\spin}(pt))\] 
for the generalised homology theory of oriented bordism~\cite{atiyah-hirzebruch:AHSS,mccleary:SS} or spin bordism. The first part can be proved geometrically as in~\cite{gordon:G-signature}. Note~\cite[\S I.6]{Brown1982} that the group homology $H_*(G)$ is the same as the homology $H_*(BG)$ of the classifying space; we will use the former notation as it is more standard in the group theory literature.   

\begin{lemma}\label{bordism}
    The map $\Omega_3(BG) \to H_3(G;\Z)$ that assigns $\varphi_*([M])$ to a pair $[M,\varphi]$ is an isomorphism.  If $H_1(G;\Z) = H_2(G;\Z) = 0$, then the same is true for $\Omega_3^{\spin}(BG)$.
\end{lemma}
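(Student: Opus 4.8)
The plan is to run the Atiyah--Hirzebruch spectral sequence (AHSS) for each bordism theory applied to the space $BG$ and to read off the answer along the line of total degree $3$. For a connective generalized homology theory $h_*$ the AHSS takes the form $E^2_{p,q} = H_p(BG; h_q(\mathrm{pt})) \Rightarrow h_{p+q}(BG)$, and its edge homomorphism $h_n(BG) \to H_n(BG; h_0(\mathrm{pt})) = H_n(G;\Z)$ is exactly the natural transformation sending a bordism class $(M,\varphi)$ to $\varphi_*([M])$. Thus identifying the stated map with an edge homomorphism is automatic, and the entire content is to show the spectral sequence degenerates along the relevant diagonal so that this edge map becomes an isomorphism.

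First I would treat oriented bordism. The input is the low-degree coefficient groups $\Omega^{SO}_0 = \Z$ and $\Omega^{SO}_1 = \Omega^{SO}_2 = \Omega^{SO}_3 = 0$. On the $E^2$-page the only possibly nonzero entry with $p+q = 3$ is then $E^2_{3,0} = H_3(G;\Z)$, since $E^2_{2,1}$, $E^2_{1,2}$, $E^2_{0,3}$ all have vanishing coefficient groups. It remains to see that $E^2_{3,0}$ is a permanent cycle: the only differential that could leave it is $d_2 \colon E^2_{3,0} \to E^2_{1,1} = H_1(G; \Omega^{SO}_1) = 0$, and nothing maps into it because any source would sit in negative $q$-degree. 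Hence $E^\infty_{3,0} = E^2_{3,0}$ is the unique nonzero associated graded piece in total degree $3$, there is no extension problem, and the edge map gives $\Omega_3(G) \cong H_3(G;\Z)$.

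The spin case has the same shape, but now the coefficient groups are nontrivial: $\Omega^{\spin}_0 = \Z$, $\Omega^{\spin}_1 = \Omega^{\spin}_2 = \Z/2$, and $\Omega^{\spin}_3 = 0$. A priori the diagonal $p+q = 3$ carries three terms, $E^2_{3,0} = H_3(G;\Z)$, $E^2_{2,1} = H_2(G;\Z/2)$, and $E^2_{1,2} = H_1(G;\Z/2)$. This is precisely where the hypothesis $H_1(G;\Z) = H_2(G;\Z) = 0$ is used: by the universal coefficient theorem $H_1(G;\Z/2)$ and $H_2(G;\Z/2)$ are built from $H_1(G;\Z)$ and $H_2(G;\Z)$ together with the torsion product $\Tor(H_0(G;\Z),\Z/2) = \Tor(\Z,\Z/2) = 0$, so both entries vanish. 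Exactly as before, $E^2_{3,0}$ supports no nonzero differential (the candidate $d_2$ lands in $E^2_{1,1} = H_1(G;\Z/2) = 0$) and receives none, giving $\Omega^{\spin}_3(G) \cong H_3(G;\Z)$.

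I do not expect a serious obstacle: the computation is formal once the coefficient groups are recalled. The only genuinely load-bearing step --- and the single place where the two cases diverge --- is killing the $\Z/2$-homology entries $E^2_{2,1}$ and $E^2_{1,2}$ in the spin sequence, which is exactly what the assumption $H_1 = H_2 = 0$ provides. As a cross-check on the first statement one could instead argue geometrically in the style of Gordon: surjectivity holds because every class in $H_3(G;\Z)$ is represented by a map from a closed oriented $3$-manifold, and injectivity because $\Omega^{SO}_3(\mathrm{pt}) = 0$ forces any representative mapping to zero in $H_3$ to bound. I would nonetheless keep the AHSS argument as the main line, since it handles both theories uniformly.
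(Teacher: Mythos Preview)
Your approach is exactly the one the paper indicates: the paper does not give a proof but simply declares the lemma a ``straightforward consequence of the Atiyah--Hirzebruch spectral sequence'' and points to Gordon for the geometric argument in the oriented case, which you also sketch as a cross-check. The oriented case is correct as written.

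In the spin case there is a small gap. You check only that $d_2\colon E^2_{3,0}\to E^2_{1,1}=H_1(G;\Z/2)$ vanishes, but there is also a possible $d_3\colon E^3_{3,0}\to E^3_{0,2}$, and in the unreduced spectral sequence $E^2_{0,2}=H_0(BG;\Omega^{\spin}_2)=\Z/2$ is \emph{nonzero}; this target is not killed by the hypothesis $H_1(G;\Z)=H_2(G;\Z)=0$. The repair is routine: either run the AHSS for \emph{reduced} spin bordism, so that $\widetilde E^2_{0,q}=\widetilde H_0(BG;\Omega^{\spin}_q)=0$ and the issue disappears (this is harmless since $\Omega^{\spin}_3(\mathrm{pt})=0$), or use the retraction $\mathrm{pt}\to BG\to\mathrm{pt}$ to see that the column $p=0$ survives unchanged to $E^\infty$, so nothing can hit it. With either fix $d_3=0$ and your edge-map argument goes through.
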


Fix an orientation of the fibres of $Y$.  Recall that the fundamental group of $Y$ is a central extension
\[\begin{tikzcd}
    \{1\} \arrow{r} &\Z \arrow{r} & \pi_1(Y) \arrow{r} &  \pi_{1}(\Sigma) \arrow{r} & \{1\}
\end{tikzcd}\]
and that the $\Z$ subgroup is the kernel of the map $\pi_1(Y) \to \pi_1(N)$. Denote by $\mu$ the generator of this $\Z$ subgroup. 

\begin{lemma} \label{create-complement}
    Suppose that there is a finitely presented perfect group $P$ and a homomorphism $\varphi: \pi_1(Y) \to P$ such that $\varphi(\mu)$ normally generates $P$ and the bordism class  $[Y,\varphi]\ \in \Omega_3(BP)$ is trivial.
Then there is an embedding of $N$ in a simply connected $4$-manifold $X$ such that the exterior $W =X - \interior(N)$ has fundamental group $P$. If $n$ is even, choose a  spin structure on $N$ with restriction $\sigma_Y$ to $Y$. If the bordism class of  $(Y,\sigma_Y,\varphi)$ in $\Omega^{\spin}_3(BP)$ is trivial, then $X$ may be chosen to be a spin manifold.
\end{lemma}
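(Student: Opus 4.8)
The plan is to convert the hypotheses directly into a nullbordism and then improve it by surgery until its fundamental group is exactly $P$. Since the bordism class of $(Y,\varphi)$ in $\Omega_3(P)$ vanishes, there is a compact oriented $4$-manifold $W_1$ with $\partial W_1 = Y$ together with a map $W_1 \to BP$, equivalently a homomorphism $\Phi_1 \colon \pi_1(W_1) \to P$ whose restriction to $\pi_1(Y)$ is $\varphi$. I would then replace $W_1$ by a manifold $W$ with the same boundary and with $\pi_1(W) \cong P$, set $X = N \cup_Y W$, and compute $\pi_1(X)$ by van Kampen.

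First I would arrange that $\Phi_1$ is an isomorphism by surgery in the interior of $W_1$, leaving the boundary $Y$ and the restriction $\varphi$ untouched. To make $\Phi_1$ surjective, connect-sum with one copy of $S^1 \times S^3$ for each generator of the finitely generated group $P$ and send the corresponding new free generator to that generator of $P$. To make it injective, note that $\pi_1(W_1)$ is finitely presented (as $W_1$ is compact) and that $P$ is finitely presented by hypothesis, so the kernel is the normal closure of finitely many elements $g_1,\dots,g_m$; representing each $g_i$ by a disjoint embedded circle in the interior and performing framed surgery replaces $\pi_1$ by its quotient by $\langle\langle g_1,\dots,g_m\rangle\rangle$. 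The resulting $W$ has $\pi_1(W) \cong P$, and since every modification was interior the induced map $\pi_1(Y) \to \pi_1(W) = P$ remains $\varphi$.

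The heart of the argument is the computation of $\pi_1(X)$ for $X = N \cup_Y W$. Since $N$ deformation retracts onto $\Sigma$, van Kampen gives $\pi_1(X) \cong \pi_1(\Sigma) *_{\pi_1(Y)} P$, where the two structure maps are the projection of the central extension (which kills $\mu$) and $\varphi$. In this pushout the meridian forces $\varphi(\mu) = 1$, because the image of $\mu$ in $\pi_1(\Sigma)$ is trivial. As $P$ is simple and $\varphi(\mu) \neq 1$, the element $\varphi(\mu)$ normally generates $P$, so the entire image of $P$ in $\pi_1(X)$ collapses. Once the copy of $P$ has collapsed, each amalgamation relation reads $\gamma \mapsto \varphi(\gamma) = 1$ for $\gamma \in \pi_1(Y)$; since $\pi_1(Y) \to \pi_1(\Sigma)$ is onto, this kills the image of $\pi_1(\Sigma)$ as well, so $\pi_1(X) = 1$. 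By construction the exterior $W = X - \interior(N)$ has fundamental group $P$, as required. This step is short but is exactly where simplicity of $P$ and the condition $\varphi(\mu)\neq 1$ are used.

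For the spin statement I would run the same argument with spin bordism. When $n$ is even one computes $w_2(N) \equiv n \bmod 2 = 0$, so $N$ carries a spin structure restricting to $\sigma_Y$ on $Y$. Triviality of $(Y,\sigma_Y,\varphi)$ in $\Omega_3^{\spin}(P)$ supplies a spin nullbordism $W_1$, and the task is to keep it spin through the surgeries. The connect-sums with $S^1 \times S^3$ cause no trouble, since $S^1 \times S^3$ is spin. The delicate point, and the step I expect to be the main obstacle, is surgery on the circles $\gamma_i$: the normal framings of an embedded circle form a torsor over $\pi_1(\SO(3)) = \Z/2$, and the two framings induce the two spin structures on $\gamma_i$. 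Choosing the framing for which the induced spin structure on $\gamma_i$ bounds lets the ambient spin structure extend across the surgery trace, without changing the effect on $\pi_1$. Carrying this out for every $\gamma_i$ yields a spin $W$ whose spin structure restricts to $\sigma_Y$, so that $X = N \cup_Y W$ is spin.
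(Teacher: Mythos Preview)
Your proposal is correct and follows essentially the same approach as the paper: produce a nullbordism $W_0$ over $BP$, modify it by interior surgeries to force $\pi_1(W)\cong P$, and then use van Kampen together with simplicity of $P$ and $\varphi(\mu)\neq 1$ to conclude $\pi_1(X)=1$. The only cosmetic difference is in how surjectivity onto $P$ is arranged: the paper connect-sums with a closed $4$-manifold $W_P$ having $\pi_1(W_P)\cong P$ and then surgers the circles $x_j^{-1}\Phi_0(x_j)$, whereas you connect-sum with copies of $S^1\times S^3$ mapping to generators of $P$; both are standard and equivalent, and your choice has the minor advantage that $S^1\times S^3$ is visibly spin so nothing extra needs to be said in the spin case.
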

\begin{proof}
    By hypothesis, there is a compact $4$-manifold $W_0$ with boundary $Y$ and the portion connected by solid arrows in the diagram below.
\begin{center}
\begin{tikzcd}
    \pi_1(Y) \arrow{rr}{\varphi} \arrow{dr}[swap]{j_*} &   &P &&\\[4ex]
&  \pi_1(W_0) \arrow[dotted]{r}{i} \arrow{ur}{\Phi_0}& \pi_1(W_1)\arrow[dotted]{r}{q} \arrow[dotted,swap,two heads]{u}{\Phi_1}& \pi_1(W) \arrow[dotted,"\cong","\Phi"']{ul}
\end{tikzcd}
\end{center}
We modify $W_0$ in two stages to fill in the dotted portion of the diagram so that $\Phi$ is an isomorphism. Since $P$ is finitely presented, there is a closed $4$-manifold $W_P$ with $\pi_1(W_P)\cong P$. Replace $W_0$ with $W_0 \cs W_P$ and note that the homomorphism $\pi_1(W_0 \cs W_P) \cong \pi_1(W_0) * P \to P$ given by $\Phi_0$ on the first factor and the identity on the second factor is a surjection. Choose finitely many generators $\{x_j\}$ for $\pi_1(W_0)$.  Now do surgery on circles in $W_0 \cs W_P$ representing the elements $x_j^{-1}\Phi_0(x_j)$ to obtain a manifold $W_1$ with a surjection $\Phi_1$ as indicated. The homomorphism $i:\pi_1(W_0) \to \pi_1(W_1)$ is induced by the inclusion of the summand $W_0$ in $W_0 \cs W_P$.  As in~\cite[Theorem 1.4]{wall:surgery,wall:book2} do surgery on finitely many circles in $W_1$ to kill the kernel of $\Phi_1$, obtaining the manifold $W$ and isomorphism $\Phi$ as indicated.

The image of $\mu$ in $\pi_1(W)$ is taken to $\varphi(\mu)$, which is assumed to be non-trivial in $P$. By hypothesis, $\varphi(\mu)$ normally generates $\pi_1(W)$. On the other hand, $\mu$ is trivial in $\pi_1(N)$, so van Kampen's theorem says that $X = W \cup N$ is simply connected.

When $n$ is even, so that $N$ has a spin structure, we describe how the argument goes through to produce a simply connected spin manifold $X$. By hypothesis, the initial manifold $W_0$ has a spin structure extending the spin structure  on $Y$ that it acquires as the boundary of $N$. We need to see that the surgeries done in the preceding argument can be chosen so that at each stage, the new manifold inherits a spin structure. A more general statement can be found in the surgery-theory literature, for example in~\cite[Lemma 2]{kreck:surgery-duality}, but this might be challenging for the reader. So we provide a direct proof (valid in any dimension greater than $2$) of exactly the statement we need.

A normal framing of a circle  $\gamma$  embedded in an $n$-manifold $Z^n$ is equivalent (up to isotopy) to an embedding $\varphi\colon S^1 \times D^{n-1} \to Z$ such that $\varphi(S^1 \times 0) = \gamma$. Write $Z_\varphi$ for the result of surgery using this framing. Then there is a cobordism  $C_\varphi$ from $Z$ to $Z_\varphi$ obtained by adding a $2$-handle $D^2\times D^{n-1}$ to $Z \times I$ along $\varphi(S^1 \times D^{n-1}) \times \{1\}$. 
\\[1ex]
{\bf Claim:} Let $Z^n$ be a spin manifold with $n \geq 3$ and let $\gamma$ be a circle embedded in $Z$. Then there is a framing $\varphi$ of $\gamma$ so that $Z_\varphi$ has a spin structure extending the spin structure on the exterior of $\gamma$.\\[1ex]
{\bf Proof of Claim:} Recall that the set of spin structures on a manifold $M$ has a free and transitive action of the group $H^1(M;\Z/2)$. In particular, $S^1 \times D^{n-1}$ has two spin structures, one of which (the `bounding spin structure') is the restriction of the unique spin structure on $D^2 \times D^{n-1}$, and the other of which does not extend over $D^2 \times D^{n-1}$. 

Starting with one framing $\varphi$, a second framing is obtained by composing $\varphi$ with the self-diffeomorphism $\tau$ of $S^1 \times D^{n-1}$ built from the non-trivial element of $\pi_1(\SO(n-1))$. It is straightforward to show that the pull-back of the bounding spin structure on $S^1 \times D^{n-1}$ is the non-bounding one, and vice versa. Now consider a spin structure $\sigma_Z$ on $Z$ that we'd like to extend over the cobordism $C_\varphi$. If the pull-back of $\sigma_Z$ to $S^1 \times D^{n-1}$ is the bounding one, then $\sigma_Z$ extends over the cobordism $C_\varphi$, yielding the desired spin structure on $Z_\varphi$. If, on the other hand,  $\varphi^*\sigma_Z$ is the non-bounding spin structure, the spin structure $(\varphi\circ \tau)^*\sigma_Z$ is the bounding one, yielding a spin structure on $Z_{\varphi\circ\tau}$. 

This establishes the claim, and completes the proof of the lemma.
\end{proof}

As remarked at the top of this section, the embedding of $N$ in $X$ yields an embedding of $\Sigma$ for which the complements have the same fundamental group. 
Hence, to complete the proof of Theorem~\ref{thmx.main}, it suffices to find groups and homomorphisms satisfying the hypotheses of Lemma~\ref{create-complement}; this is carried out in the next two sections of the paper. The first of these makes use of Thompson's infinite simple group, while the second uses an assortment of finite simple groups.

\section{From circle bundles to Thompson's group}\label{sec.V}
A group $G$ is \emph{type $\mathsf{F}_n$} if it admits a $K(G,1)$ with finite $n$-skeleton and \emph{type $\mathsf{F}_\infty$} if it is type $\mathsf{F}_n$ for all $n\geq 0$.  A group $G$ is \emph{type $\mathsf{FP}_n$} if it admits a projective resolution $P_\bullet \to \Z$ over $\Z G$ with $P_i$ finitely generated for $i\leq n$ and \emph{type $\mathsf{FL}_n$} if the $P_i$ can be taken to be finitely generated free modules for $i\leq n$.  We say $G$ is \emph{type $\mathsf{FP}_\infty$} it is type $\mathsf{FP}_n$ for all $n$ and \emph{type $\mathsf{FL}_\infty$} it is type $\mathsf{FL}_n$ for all $n$.  Note that $\mathsf{FP}_n$ and $\mathsf{FL}_n$ are equivalent \cite[VIII, Prop 4.1]{Brown1982} and that Wall proved in \cite{Wall1965} that a finitely presented group of type $\mathsf{FL}_n$ is type $\mathsf{F}_n$.

\begin{prop}\label{prop.V}
    Let $S^1\to Y\to \Sigma_g$ exhibit $Y$ as a circle bundle over a surface of genus $g\geq 1$.  Let $H=\pi_1 (Y)$ and let $\mu\in H$ be a generator of $\pi_1(S^1)< H$.  Then, there exists a group $V$ and a homomorphism $\psi\colon H\to V$ such that 
    \begin{enumerate}
        \item $V$ is finitely presented, in fact $\mathsf{F}_\infty$;
        \item $H_i(V;\Z)=0$ for $i=1,2,3$; in particular $\Omega_3(BV)=\Omega^{\spin}_3(BV)=0$;
        \item $\psi(\mu)$ normally generates $V$.
    \end{enumerate}
\end{prop}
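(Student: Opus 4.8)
The plan is to take $V$ to be Thompson's group $V$ itself and to read off the three properties, the first two from the literature and the last by a short group-theoretic argument. Properties (1) and (2) are known facts about this group: $V$ is finitely presented and in fact of type $\F_\infty$ (Brown), it is simple (Thompson), and it is acyclic (Szymik--Wahl). Acyclicity means $H_k(V;\Z)=0$ for all $k>0$; in particular $H_1(V;\Z)=H_2(V;\Z)=H_3(V;\Z)=0$, so \Cref{bordism} gives $\Omega_3(V)=0$ and $\Omega_3^{\spin}(V)=0$. Thus (1) and (2) hold, and the only thing left to construct is the homomorphism $\psi$.

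For (3) I would first note that, since $V$ is simple, every non-trivial element normally generates it, so it suffices to produce $\psi\colon H\to V$ with $\psi(\mu)\neq 1$. The temptation is to detect $\mu$ after abelianising, but this fails precisely in the cases of interest: when the Euler number is $\pm 1$ the fibre class $\mu$ already lies in $[H,H]$, and in any event $V$ is perfect, so no abelian target sees anything. I would therefore pass to a \emph{non-abelian} finite quotient. As $H=\pi_1 Y$ is the fundamental group of a Seifert fibred $3$-manifold it is residually finite, and as $\mu$ generates the (non-trivial) centre we have $\mu\neq 1$; hence there is a finite group $G$ and a surjection $q\colon H\onto G$ with $q(\mu)\neq 1$. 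Every finite group embeds into $V$: realise $G\hookrightarrow\sym(|G|)$ by the left regular representation, and $\sym(|G|)$ inside $V$ by permuting the blocks of a clopen partition of the Cantor set into $|G|$ copies of itself, giving an embedding $\iota\colon G\hookrightarrow V$. Then $\psi=\iota\circ q\colon H\to V$ satisfies $\psi(\mu)=\iota(q(\mu))\neq 1$, as needed.

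The genuinely substantive input is the acyclicity of $V$, which is what forces the bordism obstruction to vanish; I expect this (a cited theorem) to be the one deep ingredient, while the construction of $\psi$ is elementary once abelian targets are abandoned. The residual finiteness step is standard, being a general property of $3$-manifold groups, and is used only to detect the single element $\mu\neq 1$ in a finite quotient; the centrality of $\mu$ is needed solely to know that $\mu$ is non-trivial. If one prefers an explicit quotient in the circle-bundle case, one can send a single commutator pair $a_1,b_1$ into a Heisenberg group over $\FF_p$ with $p\nmid n$, arranged so that the fibre class maps to a non-trivial central element; but the residual-finiteness formulation is cleaner and applies to an arbitrary Seifert fibred $Y$.
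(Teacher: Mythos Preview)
Your proposal is correct and follows essentially the same approach as the paper: take $V$ to be Thompson's group $V$, invoke its simplicity, type $\F_\infty$, and acyclicity from the literature, then use residual finiteness of $H$ to find a finite quotient detecting $\mu$ and embed that finite group into $V$. The only cosmetic differences are in attribution (the paper cites Higman for simplicity and for containing all finite groups, and Hempel for residual finiteness) and that you spell out the embedding via the regular representation into a symmetric group, whereas the paper simply cites this as a known fact.
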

\begin{proof}
    We let $V$ denote Thompson's group $V$ (see \cite{Higman1974} for a definition).  We recount the following facts:
    \begin{enumerate}
        \item $V$ is a simple group \cite{Higman1974};
        \item $V$ is acyclic \cite{SzymikWahl2019}, so $H_i(V;\Z)=0$ for all $i\geq 1$;
        \item $V$ contains every finite group as a subgroup \cite{Higman1974};\label{Vfinite}
        \item $V$ is finitely presented \cite{Higman1974};
        \item $V$ is type $\mathsf{FP}_\infty$ \cite{Brown1987};
        \item $V$ is type $\mathsf{F}_\infty$, which follows from the previous two items.
    \end{enumerate}

By \cite{Hempel1987} the group $H$ is residually finite.  Thus, we can find a finite quotient $h\colon H\onto Q$ such that $h(\mu)$ is non-trivial.  Since every finite group is a subgroup of $V$ we may embed $Q$ into $V$ via some homomorphism $i\colon Q\to V$.  We define $\psi$ to be $i\circ h$.  Now, as $V$ is simple, every non-trivial element of $V$ normally generates it. Thus, $\psi(\mu)$ normally generates $V$.
\end{proof}

\begin{remark}
When the Euler number of the circle bundle $n$ is greater than one in absolute value, we do not have to appeal to the residual finiteness of $H$. For in this case, $H$ has a surjection onto $\Z/n$ taking $\mu$ to a generator. By item \eqref{Vfinite} above, $\Z/n$ is a subgroup of $V$, which provides the desired $\psi$. It is slightly more delicate to find explicit homomorphisms when $n = \pm 1$; in the next section we do this with $V$ replaced by various finite simple groups.
\end{remark}

\begin{proof}[Proof of \Cref{thmx.main}]
     Combining \Cref{prop.V} with \Cref{create-complement} completes the proof of \Cref{thmx.main}.  Here the complements $X-\Sigma$ have fundamental group $V$.
\end{proof}

\begin{remark}[Higman--Thompson groups]
    In fact we can build more examples out of the Higman--Thompson groups $V_{m,r}$.  In \cite{SzymikWahl2019}, the authors show $H_\ast(V_{m,r};\Z)\cong H_\ast(\Omega^\infty_0\mathbf{M}_{m-1};\Z)$, where the second object is the homology of the zeroth component of the infinite loop space of the mod $m-1$ Moore spectrum.  The relevance for us is Propositions 6.1 and 6.2 of \emph{ibid}; there it is shown that when $m$ is even and $p$ is the smallest prime dividing $m-1$ we have $\widetilde H_d(V_{m,r};\Z)=0$ for $d<2p-3$ and $H_{2p-3}(V_{m,r};\Z)=\Z/p$.  In particular, these groups can be realised as the fundamental group of a surface complement with one caveat: if $3|(m-1)$, then one must map to a finite group $Q$ with $|H_3(Q;\Z)|$ coprime to $3$ to ensure the bordism class vanishes.
\end{remark}

\begin{remark}\label{rem linear reps}
    Higman--Thompson groups have no non-trivial linear representations.  Indeed, by Mal'cev's Theorem \cite{Maltsev1965}, a finitely generated linear group is residually finite.  Hence, any finitely generated group admitting a non-trivial linear representation also admits a non-trivial finite quotient.  But $V_n$ is simple and so does not admit any non-trivial finite quotients.
\end{remark}

\section{Finite simple groups}\label{S:simple}
In this section, to demonstrate the flexibility of the method, we construct examples of surface complements with fundamental group isomorphic to certain finite simple groups.  This demonstrates that the the fundamental group of a surface complement need not be trivial or infinite.  It is also noteworthy because finite groups have many linear representations, as opposed to the Higman--Thompson groups, which have no non-trivial linear representations (see \Cref{rem linear reps}). Note that the finite groups we consider have no non-trivial representations in $\SO(3)$ or $\SU(2)$, so we do not contradict \cite[Proposition~5.7]{kronheimer-mrowka:I}.

\subsection*{Self-intersection number one}
We first consider the case where the disk bundle over the surface $\Sigma$ has Euler number equal to one or minus one.  

This section will use a number of notions from finite group theory so before we begin we establish some notation.  The group $\He_3(p)$ is the ($3\times3)$-\emph{Heisenberg group} over $\FF_p$ and is isomorphic to the group of ($3\times3)$-uni-triangular matrices over the finite field $\FF_p$ of $p$-elements.  Here a uni-triangular matrix is upper triangular with ones on the diagonal.  We will refer to a number of sporadic simple groups: the \emph{Mathieu groups} $M_{22}$ and $M_{23}$, the \emph{Higman--Sims} group $HS$, the \emph{Held group} $He$, the \emph{McLaughlin group} $McL$, the \emph{Janko groups} $J_3$ and $J_4$, the \emph{Lyons group} $Ly$, the \emph{Conway groups} $Co_3,$ $Co_2$, and $Co_1$, the \emph{Suzuki group} $Suz$, the \emph{O'Nan group} {\it O'N}, the \emph{Fisher group} $Fi_{22}$ and the \emph{Monster group} $M$.  Our notation for these groups is fairly standard and follows the Atlas of finite simple groups \cite{ATLASfsg}.

\begin{prop}\label{prop sporadic}
    Let $\Sigma$ be a closed oriented surface of genus at least one.  Each sporadic simple group in \Cref{table} can be realised as the fundamental group of a surface complement $X-\Sigma$ with $\Sigma\cdot \Sigma =\pm 1$, where $X$ is some closed oriented simply connected $4$-manifold.
\end{prop}

\begin{table}[h]
\begin{tabular}{|l|llllllll|}
\hline
            & $M_{22}$ & $M_{23}$ & $HS$ & $He$ & $McL$ & $J_3$ & $J_4$ & $Ly$   \\
            \hline
$H_2(P;\Z)$ & 12       & 0        & 2    & 0    & 0     & 3     & 0     & 0      \\
$H_3(P;\Z)$ & 0        & 0        & 2+2  & 12   & 0     & 15    & 0     & 0      \\
$\He_3(p)$  & 2        & 2        & 5    & 7    & 2,3,5 & 2     & 2,3   & 2,3,5  \\
\hline \hline
            & $Co_3$  &$Co_2$  & $Co_1$    & $Suz$   & {\it O'N}    & $Fi_{22}$    & $M$   &       \\
            \hline
$H_2(P;\Z)$ & 0      & 0         & 2         & 6       & 3        & 6          & 0 &            \\
$H_3(P;\Z)$ & 6      & 4         & 12        & 4       & 8        & 1          & \multicolumn{2}{l|}{$24+(\leq 4)$}  \\
$\He_3(p)$  & 5      & 3,5       & 5         & 3       & 7        & 2,3        & 5, 7    &    \\
\hline
\end{tabular}
\caption{Some sporadic simple groups $P$ where the third homology is known (except in the case of the Monster group $M$ where there are three possibilities) and which admit a $\He_3(p)$ subgroup with $p$ coprime to the order of the third homology of $P$.  The notation $m+n$ in the homology rows is interpreted as $\Z/m \oplus \Z/n$; a $\leqslant k$ means a subgroup of $\Z/k$ appears a summand. The $p$ for which such a subgroup exists are listed in the $\He_3(p)$ row.}
\label{table}
\end{table}

\begin{proof}
Let $H$ denote the fundamental group of the associated circle bundle over $\Sigma$.  Since $\Sigma\cdot\Sigma=\pm 1$,  $H$ has a presentation
\[\left\langle a_1,b_1,\dots,a_g,b_g,z\ |\ \prod_{i=1}^g[a_i,b_i]=z,\ [a_j,z]=[b_j,z]=1 \text{ for }1\leq j\leq g\right\rangle. \]
The group $H$ admits a surjection onto $\He_3(\Z)$, the $3$-dimensional integer Heisenberg group (uni-triangular matrices), with presentation
\[\left\langle a,b,z\ |\ [a,b]=z,\ [a,z]=[b,z]=1 \right\rangle.\]
The homomorphism is given by $a_1\mapsto a$, $b_1\mapsto b$, $a_i,b_i\mapsto 1$ for $i\geq2$, and of course $z\mapsto z$.  For each prime $p$, the group $\He_3(\Z)$ admits a surjection onto the $p$-group $\He_3(p)$ by considering the modulo $p$ reduction of matrices in $\He_3(\Z)$.  Note that when $p=2$ this group is the dihedral group on $4$ points $D_4$ containing $8$ elements.

We want to apply \Cref{create-complement}; the following conditions are sufficient on some finite simple group $P$:
\begin{enumerate}
    \item $\He_3(p)\leqslant P$
    \item $H_3(P;\Z)$ has no $p$-torsion.
\end{enumerate}
Indeed, we want the class $(Y,\varphi)$ to be trivial in $\Omega_3(BP)$, but this class is in the image of the composition
\[H_3(Y;\Z)\to H_3(\pi_1(Y);\Z) \to H_3(\He_3(p);\Z) \to H_3(P;\Z)\]
and the group $H_3(\He_3(p);\Z)$ is annihilated by multiplication by $p^3$.  So the last map is trivial and hence $[Y,\varphi]=0\in\Omega_3(BP)$.  Thus, to complete the proof we just have to confirm the veracity of the information in \Cref{table}.

In \Cref{table}, the existence or non-existence of the $\He_3(p)$ subgroups is easily verified by consulting the ATLAS of finite simple groups \cite{ATLASfsg}.  The homology computations were obtained from the following sources: the low-dimensional homology of the Mathieu groups can be found in \cite{DutourEllis2009} and the remaining computations are contained in \cite{Johnson-Freyd--Truemann2019}.  Note that for the Monster group $M$, it is known that $H_3(M;\Z)$ is a subgroup of $\Z/24 \oplus \Z/4$ but the exact group is not known.  The groups $M_{11}$, $M_{12}$, $M_{24}$, $J_1$, $J_2$ have been omitted from \Cref{table} due to having no $\He_3(p)$ subgroup with $p$ coprime to $|H_3(P;\Z)|$.  The groups $Ru$, $HN$, $Th$, $Fi_{23}$, $Fi_{24}'$, and $B$ have been omitted from \Cref{table} because neither $H_3(G;\Z)$ nor its prime divisors are known.
\end{proof}

\subsection*{Self-intersection number greater than one}
As remarked in \Cref{sec.V}, when the self-intersection number $n$ satisfies $|n|>1$ there is a surjection $\pi_1(Y)\onto \Z/n$ such that the image of $\mu$ is a generator of $\Z/n$.  Here we will show that for $n$ divisible by a prime $p\geq 7$, we can realise the simple group $\PSL_2(p)$ as a surface complement.

Recall that $\SL_2(p)$ is the group of invertible $2\times 2$-matrices over $\FF_p$ with determinant equal to $1$ and that $Z(\SL_2(p))=\{\pm I_2\}$.  We identify $\PSL_2(p)$ with $\SL_2(p)/Z(\SL_2(p))$.

\begin{prop}\label{prop psl}
    For each prime $p\geq 7$, integer $g\geq 0$, and $n\in \Z$ such that $p|n$, the group $\PSL_2(p)$ can be realised as the fundamental group of a surface complement $X-\Sigma$, where $X$ is a closed oriented simply connected $4$-manifold, $\Sigma$ has genus $g$, and $\Sigma\cdot \Sigma =\pm n$.
\end{prop}

\begin{proof}
The key point is that $\Z/n$ surjects onto $\Z/p$ and this latter group is isomorphic to a Sylow $p$-subgroup of the simple group $P=\PSL_2(p)$.  Before we prove the proposition we show that $H_3(P;\Z)$ has no elements of order $p$.

Note that a Sylow $p$-subgroup $Q$ of $P$ is isomorphic to $\Z/p$. We may take $Q$ to be the image of the upper triangular matrices with $1$s on the diagonal under the projection $\SL_2(p)\to \PSL_2(p)$.  The action of $N_P(Q)$ on $Q$ factors through a homomorphism $\phi: N_P(Q) \to \Aut(Q)$ whose kernel is $C_P(Q)$.  It is straightforward to compute that $|N_P(Q)|=\frac{1}{2}p(p-1)$ and $C_P(Q) = Q$, so the image of $\phi$ is the unique subgroup of index 2 in the cyclic group $\Aut(Q)$.

By \cite[Lemma~1]{Swan1960} (see also \cite[Theorems~6.6 and 6.8]{AdemMilgram1994}), the $p$-part of $H^k(P;\Z)$ for $k\geq 1$ is the fixed points of the $N_P(Q)$-action on $H^k(\Z/p;\Z)$.   We have that $H_1(Q;\Z)\cong Q$ and $H_2(Q;\Z)$ is trivial, so by the Universal Coefficient Theorem, $H^2(Q;\Z)\cong Q$.  Thus the action of $\Aut(Q) \cong (\Z/p)^\times$ on $H^2(Q;\Z)\cong \Z/p$ is given by multiplication in $\Z/p$.   Now, by naturality of the cup product, we see that the action on $H^{2k}(Q;\Z)$ is the $k$th power of the action on $H^2(Q;\Z)$.  Hence, the degrees $\ell=2k$ with $\frac{1}{2}(p-1)|k$, where $H^{2k}(P;\Z)$ contains $p$-torsion, are exactly the degrees for which the action is trivial. In particular, for $p\geq 7$, by applying the Universal Coefficient Theorem, we see that the group $H_3(P;\Z)$ has no elements of order $p$.

Let $Y$ be an oriented circle bundle over a closed genus $g\geq0$ surface of Euler number $n$.  Consider the composite $\varphi\colon \pi_1(Y)\twoheadrightarrow \Z/n \twoheadrightarrow \Z/p \rightarrowtail P$ which is non-trivial on a generator of the centre of $\pi_1(Y)$.  We claim the bordism class $[Y,\varphi]$ vanishes in $\Omega_3(BP)\cong H_3(P;\Z)$.  Indeed, since $H_3(\Z/p;\Z)\cong\Z/p$ and $H_3(P;\Z)$ has no elements of order $p$ we see that map $H_3(\Z/p;\Z) \to H_3(P;\Z)$ is zero.  The proposition follows from applying \Cref{create-complement}.
\end{proof}

\begin{corollary}
    For any $n\in\Z$, there is an oriented closed simply connected $4$-manifold $X$ and surface $\Sigma$ smoothly embedded in $X$ such that $\Sigma \cdot \Sigma = n$ and $\pi_1(X-\Sigma)$ is a non-abelian finite simple group.
\end{corollary}
\begin{proof}
    After applying \Cref{prop sporadic} and \Cref{prop psl} we are left with the case that $n$ is divisible only by $2$, $3$, or $5$.  In these cases we use homomorphisms to the Lyons group $Ly$ given, respectively, by $\pi_1 (Y)\onto \Z/2\rightarrowtail  Ly$, $\pi_1(Y)\onto \Z/3\rightarrowtail Ly$, or $\pi_1 (Y)\onto \Z/5\rightarrowtail Ly$ which are all seen to vanish on third homology since $H_3(Ly;\Z)=0$; see \Cref{table}.
\end{proof}

\subsection*{A classification?}
Whilst we have provided many examples of finite simple groups which can appear as complements of surfaces in simply connected $4$-manifolds, we fall a long way short of a complete classification. 
To this end we raise the following question.

\begin{quest}
    Let $G$ be a finite simple group.  Which $(g,n)\in\NN_{\geq1}\times\ZZ$ ensure there exists a simply connected $4$-manifold and $\Sigma_g\subset X$, a smoothly embedded surface of genus $g$, with $\Sigma_g\cdot\Sigma_g = n$ such that $\pi_1(X-\Sigma)\cong G$?
\end{quest}

\section{The non-primitive case}\label{multiple}
    The arguments and examples in the preceding sections produce surface complements $W = X- \Sigma$ with $H_1(W) = 0$ but for which $\pi_1(W)$ is non-trivial, answering the question of Kronheimer. For non-primitive classes (to which his question does not apply) the corresponding statement would be that there are surface complements for which the fundamental group of the complement is not the cyclic group $\Z/d$, where $d$ is the multiplicity (defined in Remark~\ref{orient}) of the homology class $[\Sigma] \in H_2(X)$. In this short section we show that this holds by presenting examples of surfaces whose complements have non-abelian fundamental groups. The construction is explicit and does not require any of our group-theoretic arguments.  
\begin{prop}
    For any non-zero integers $m$ and $d$, there is a simply connected $4$-manifold containing a smoothly embedded surface of multiplicity $d$ and self-intersection $d^2m$ whose complement has non-abelian fundamental group. 
\end{prop}    
    \begin{proof}
    Recall Zeeman's $d$-twist spinning construction~\cite{zeeman:twist}, which from a knot $K$ in $S^3$ produces a fibred knot $\tau_d(K)$ in $S^4$. The fibre is $\Sigma_d(K)$, the $d$-fold branched cyclic cover of $S^3$ branched along $K$, minus a ball, and the monodromy is a generator of the covering transformations. 

  Write $n = d^2m$, and assume without loss of generality (as in Remark~\ref{orient}) that $m > 0$. The first step is to find a surface $\Sigma_0$ embedded in a simply connected manifold $X$ with simply connected complement, with $\Sigma_0 \cdot \Sigma_0 = m$. For instance, one could take an algebraic curve in $\cptwo$ of sufficiently high degree so that its self-intersection is greater than $m$, and then blow up enough points to lower the self-intersection to $m$.  Let $X$ be the resulting blow-up of $\cptwo$. Note that that the blown-up curve now meets a $2$-sphere (any of the exceptional curves in the blow-up) in a single point and hence has simply connected complement.  Then $d$ times the homology class of $\Sigma_0$ is represented by a smoothly embedded surface $\Sigma_1 \subset X$ with $\pi_1(X - \Sigma_1) \cong \Z/d$.  

    Now replace $\Sigma_1$ by $\Sigma = \Sigma_1 \# \tau_d(K)$ where $K$ is any non-trivial knot in $S^3$; we will show that the fundamental group of the complement of $\Sigma$ is non-abelian.    It is argued in~\cite{kim:surfaces} that the fundamental group of $X - \Sigma$ is $\pi_1(S^3- K)/\mu^d$, and that this contains the fundamental group of the $d$-fold cyclic cover of $S^3$ branched along $K$ as an index $d$ subgroup.   By the solution of the generalized Smith conjecture~\cite[page 4]{bass-morgan:smith}, the branched cover of $S^3$ along a non-trivial knot cannot have trivial fundamental group. (This also follows from the orbifold theorem~\cite{cooper-hodgson-kerckhoff:orbifolds,boileau-porti:cyclic,boileau-leeb-porti:orbifolds}.)  In other words, $\pi_1(S^3- K)/\mu^d$ has a non-trivial index $d$ subgroup. If $\pi_1(S^3- K)/\mu^d$ were abelian, it would have to be isomorphic to its abelianization, $\Z/d$, and hence the only index $d$ subgroup it contains would be trivial. It follows that $\pi_1(S^3- K)/\mu^d$, and hence the fundamental group of $X - \Sigma$, is non-abelian.
    \end{proof}

\bibliographystyle{halpha}
\bibliography{refs.bib}

@book {ATLASfsg,
    AUTHOR = {Conway, J. H. and Curtis, R. T. and Norton, S. P. and Parker,
              R. A. and Wilson, R. A.},
     TITLE = {{$\mathbb{ATLAS}$} of finite groups},
      NOTE = {Maximal subgroups and ordinary characters for simple groups,
              With computational assistance from J. G. Thackray},
 PUBLISHER = {Oxford University Press, Eynsham},
      YEAR = {1985},
     PAGES = {xxxiv+252},
      ISBN = {0-19-853199-0},
   MRCLASS = {20D05 (20-02)},
  MRNUMBER = {827219},
MRREVIEWER = {R.\ L.\ Griess},
}

@book {AdemMilgram1994,
    AUTHOR = {Adem, Alejandro and Milgram, R. James},
     TITLE = {Cohomology of finite groups},
    SERIES = {Grundlehren der mathematischen Wissenschaften [Fundamental
              Principles of Mathematical Sciences]},
    VOLUME = {309},
 PUBLISHER = {Springer-Verlag, Berlin},
      YEAR = {1994},
     PAGES = {viii+327},
      ISBN = {3-540-57025-X},
   MRCLASS = {20J06 (20-01 55P20)},
  MRNUMBER = {1317096},
MRREVIEWER = {David\ Benson},
       DOI = {10.1007/978-3-662-06282-1},
       URL = {https://doi.org/10.1007/978-3-662-06282-1},
}

@article {atiyah:bordism-cobordism,
    AUTHOR = {Atiyah, M. F.},
     TITLE = {Bordism and cobordism},
   JOURNAL = {Proc. Cambridge Philos. Soc.},
  FJOURNAL = {Proceedings of the Cambridge Philosophical Society},
    VOLUME = {57},
      YEAR = {1961},
     PAGES = {200--208},
      ISSN = {0008-1981},
   MRCLASS = {57.10},
  MRNUMBER = {126856},
MRREVIEWER = {A.\ Dold},
       DOI = {10.1017/s0305004100035064},
       URL = {https://doi.org/10.1017/s0305004100035064},
}

@incollection {atiyah-hirzebruch:AHSS,
    AUTHOR = {Atiyah, M. F. and Hirzebruch, F.},
     TITLE = {Vector bundles and homogeneous spaces},
 BOOKTITLE = {Proc. {S}ympos. {P}ure {M}ath., {V}ol. {III}},
     PAGES = {7--38},
 PUBLISHER = {Amer. Math. Soc., Providence, RI},
      YEAR = {1961},
   MRCLASS = {57.30 (14.52)},
  MRNUMBER = {139181},
MRREVIEWER = {R.\ Bott},
}

@book {Brown1982,
    AUTHOR = {Brown, Kenneth S.},
     TITLE = {Cohomology of groups},
    SERIES = {Graduate Texts in Mathematics},
    VOLUME = {87},
      NOTE = {Corrected reprint of the 1982 original},
 PUBLISHER = {Springer-Verlag, New York},
      YEAR = {1994},
     PAGES = {x+306},
      ISBN = {0-387-90688-6},
   MRCLASS = {20J05 (20-02)},
  MRNUMBER = {1324339},
}

@inproceedings {Brown1987,
    AUTHOR = {Brown, Kenneth S.},
     TITLE = {Finiteness properties of groups},
 BOOKTITLE = {Proceedings of the {N}orthwestern conference on cohomology of
              groups ({E}vanston, {I}ll., 1985)},
   JOURNAL = {J. Pure Appl. Algebra},
  FJOURNAL = {Journal of Pure and Applied Algebra},
    VOLUME = {44 (1-3)},
      YEAR = {1987},
     PAGES = {45--75},
      ISSN = {0022-4049,1873-1376},
   MRCLASS = {20J05 (11F75 20F05 22E40)},
  MRNUMBER = {885095},
MRREVIEWER = {Ralph\ Strebel},
       DOI = {10.1016/0022-4049(87)90015-6},
       URL = {https://doi.org/10.1016/0022-4049(87)90015-6},
}

@article {DutourEllis2009,
    AUTHOR = {Dutour Sikiri\'{c}, Mathieu and Ellis, Graham},
     TITLE = {Wythoff polytopes and low-dimensional homology of {M}athieu
              groups},
   JOURNAL = {J. Algebra},
  FJOURNAL = {Journal of Algebra},
    VOLUME = {322},
      YEAR = {2009},
    NUMBER = {11},
     PAGES = {4143--4150},
      ISSN = {0021-8693,1090-266X},
   MRCLASS = {20J06},
  MRNUMBER = {2556144},
MRREVIEWER = {V.\ D.\ Mazurov},
       DOI = {10.1016/j.jalgebra.2009.09.031},
       URL = {https://doi.org/10.1016/j.jalgebra.2009.09.031},
}

@incollection {gordon:G-signature,
    AUTHOR = {Gordon, C. McA.},
     TITLE = {On the {$G$}-signature theorem in dimension four},
 BOOKTITLE = {\`{A} la recherche de la topologie perdue},
    SERIES = {Progr. Math.},
    VOLUME = {62},
     PAGES = {159--180},
 PUBLISHER = {Birkh\"auser Boston, Boston, MA},
      YEAR = {1986},
   MRCLASS = {58G10 (57S17)},
  MRNUMBER = {900251},
}

@incollection {Hempel1987,
    AUTHOR = {Hempel, John},
     TITLE = {Residual finiteness for {$3$}-manifolds},
 BOOKTITLE = {Combinatorial group theory and topology ({A}lta, {U}tah,
              1984)},
    SERIES = {Ann. of Math. Stud.},
    VOLUME = {111},
     PAGES = {379--396},
 PUBLISHER = {Princeton Univ. Press, Princeton, NJ},
      YEAR = {1987},
      ISBN = {0-691-08409-2; 0-691-08410-6},
   MRCLASS = {57M05 (20E26 20F34 57N10)},
  MRNUMBER = {895623},
}

@book {Higman1974,
    AUTHOR = {Higman, Graham},
     TITLE = {Finitely presented infinite simple groups},
    SERIES = {Notes on Pure Mathematics},
    VOLUME = {No. 8},
 PUBLISHER = {Australian National University, Department of Pure
              Mathematics, Department of Mathematics, I.A.S., Canberra},
      YEAR = {1974},
     PAGES = {vii+82},
   MRCLASS = {20F05 (20B25)},
  MRNUMBER = {376874},
MRREVIEWER = {F.\ Levin},
}

@incollection {hsiang-szczarba,
    AUTHOR = {Hsiang, W. C. and Szczarba, R. H.},
     TITLE = {On embedding surfaces in four-manifolds},
 BOOKTITLE = {Algebraic topology ({P}roc. {S}ympos. {P}ure {M}ath., {V}ol.
              {XXII}, {U}niv. {W}isconsin, {M}adison, {W}is., 1970)},
    SERIES = {Proc. Sympos. Pure Math.},
    VOLUME = {Vol. XXII},
     PAGES = {97--103},
 PUBLISHER = {Amer. Math. Soc., Providence, RI},
      YEAR = {1971},
   MRCLASS = {57D95},
  MRNUMBER = {339239},
MRREVIEWER = {J.\ M.\ Boardman},
}

@article {Johnson-Freyd--Truemann2019,
    AUTHOR = {Johnson-Freyd, Theo and Treumann, David},
     TITLE = {Third homology of some sporadic finite groups},
   JOURNAL = {SIGMA Symmetry Integrability Geom. Methods Appl.},
  FJOURNAL = {SIGMA. Symmetry, Integrability and Geometry. Methods and
              Applications},
    VOLUME = {15},
      YEAR = {2019},
     PAGES = {Paper No. 059, 38},
      ISSN = {1815-0659},
   MRCLASS = {20D08 (20J06)},
  MRNUMBER = {3990846},
MRREVIEWER = {V.\ D.\ Mazurov},
       DOI = {10.3842/SIGMA.2019.059},
       URL = {https://doi.org/10.3842/SIGMA.2019.059},
}

@article {kim-ruberman:complement,
    AUTHOR = {Kim, Hee Jung and Ruberman, Daniel},
     TITLE = {Smooth surfaces with non-simply-connected complements},
   JOURNAL = {Algebr. Geom. Topol.},
  FJOURNAL = {Algebraic \& Geometric Topology},
    VOLUME = {8},
      YEAR = {2008},
    NUMBER = {4},
     PAGES = {2263--2287},
      ISSN = {1472-2747},
   MRCLASS = {57R57 (57N13)},
  MRNUMBER = {MR2465741},
       DOI = {10.2140/agt.2008.8.2263},
       URL = {http://dx.doi.org/10.2140/agt.2008.8.2263},
}

@incollection{kirby:problems96,
author = "Kirby, R.C.",
title = "Problems in Low--Dimensional Topology",
booktitle = "Geometric Topology",
editor = "Kazez, W.",
publisher = "American Math.\ Soc./International Press",
address = "Providence",
year = 1997}

@article {kreck:surgery-duality,
    AUTHOR = {Kreck, Matthias},
     TITLE = {Surgery and duality},
   JOURNAL = {Ann. of Math. (2)},
  FJOURNAL = {Annals of Mathematics. Second Series},
    VOLUME = {149},
      YEAR = {1999},
    NUMBER = {3},
     PAGES = {707--754},
      ISSN = {0003-486X,1939-8980},
   MRCLASS = {57R67 (57R65)},
  MRNUMBER = {1709301},
MRREVIEWER = {Laurence\ R.\ Taylor},
       DOI = {10.2307/121071},
       URL = {https://doi.org/10.2307/121071},
}

@article{kronheimer-mrowka:I,
    AUTHOR = {Kronheimer, P. B. and Mrowka, T. S.},
     TITLE = {Gauge theory for embedded surfaces. {I}},
   JOURNAL = {Topology},
  FJOURNAL = {Topology. An International Journal of Mathematics},
    VOLUME = {32},
      YEAR = {1993},
    NUMBER = {4},
     PAGES = {773--826},
      ISSN = {0040-9383},
   MRCLASS = {57R57 (57N13 57R40 57R55 58D29)},
  MRNUMBER = {1241873},
MRREVIEWER = {Ronald\ J.\ Stern},
       DOI = {10.1016/0040-9383(93)90051-V},
       URL = {https://doi.org/10.1016/0040-9383(93)90051-V},
}

@Article{Maltsev1965,
 Author = {Mal'tsev, A. I.},
 Title = {On the faithful representation of infinite groups by matrices},
 FJournal = {Translations. Series 2. American Mathematical Society},
 Journal = {Transl., Ser. 2, Am. Math. Soc.},
 ISSN = {0065-9290},
 Volume = {45},
 Pages = {1--18},
 Year = {1965},
 Language = {English},
 DOI = {10.1090/trans2/045/01},
 zbMATH = {3254031},
 Zbl = {0158.02905}
}

@article {Swan1960,
    AUTHOR = {Swan, Richard G.},
     TITLE = {The {$p$}-period of a finite group},
   JOURNAL = {Illinois J. Math.},
  FJOURNAL = {Illinois Journal of Mathematics},
    VOLUME = {4},
      YEAR = {1960},
     PAGES = {341--346},
      ISSN = {0019-2082},
   MRCLASS = {20.25 (18.00)},
  MRNUMBER = {122856},
MRREVIEWER = {S.-T.\ Hu},
       URL = {http://projecteuclid.org/euclid.ijm/1255456050},
}

@article {SzymikWahl2019,
    AUTHOR = {Szymik, Markus and Wahl, Nathalie},
     TITLE = {The homology of the {H}igman-{T}hompson groups},
   JOURNAL = {Invent. Math.},
  FJOURNAL = {Inventiones Mathematicae},
    VOLUME = {216},
      YEAR = {2019},
    NUMBER = {2},
     PAGES = {445--518},
      ISSN = {0020-9910,1432-1297},
   MRCLASS = {19D23 (20J05)},
  MRNUMBER = {3953508},
MRREVIEWER = {Fernando\ Muro},
       DOI = {10.1007/s00222-018-00848-z},
       URL = {https://doi.org/10.1007/s00222-018-00848-z},
}

@book {mccleary:SS,
    AUTHOR = {McCleary, John},
     TITLE = {A user's guide to spectral sequences},
    SERIES = {Cambridge Studies in Advanced Mathematics},
    VOLUME = {58},
   EDITION = {Second},
 PUBLISHER = {Cambridge University Press, Cambridge},
      YEAR = {2001},
     PAGES = {xvi+561},
      ISBN = {0-521-56759-9},
   MRCLASS = {55Txx (18G40)},
  MRNUMBER = {1793722},
MRREVIEWER = {Frank\ Neumann},
}

@book {wall:book2,
    AUTHOR = {Wall, C. T. C.},
     TITLE = {Surgery on compact manifolds},
    SERIES = {Mathematical Surveys and Monographs},
    VOLUME = {69},
   EDITION = {Second},
      NOTE = {Edited and with a foreword by A. A. Ranicki},
 PUBLISHER = {American Mathematical Society, Providence, RI},
      YEAR = {1999},
     PAGES = {xvi+302},
      ISBN = {0-8218-0942-3},
   MRCLASS = {57R67 (19J25 57-02)},
  MRNUMBER = {1687388},
       DOI = {10.1090/surv/069},
       URL = {https://doi.org/10.1090/surv/069},
}

@article {wall:surgery,
    AUTHOR = {Wall, C. T. C.},
     TITLE = {Surgery of non-simply-connected manifolds},
   JOURNAL = {Ann. of Math. (2)},
  FJOURNAL = {Annals of Mathematics. Second Series},
    VOLUME = {84},
      YEAR = {1966},
     PAGES = {217--276},
      ISSN = {0003-486X},
   MRCLASS = {57.20 (57.10)},
  MRNUMBER = {212827},
MRREVIEWER = {A.\ Haefliger},
       DOI = {10.2307/1970519},
       URL = {https://doi.org/10.2307/1970519},
}

@article {zeeman:twist,
    AUTHOR = {Zeeman, E. C.},
     TITLE = {Twisting spun knots},
   JOURNAL = {Trans. Amer. Math. Soc.},
  FJOURNAL = {Transactions of the American Mathematical Society},
    VOLUME = {115},
      YEAR = {1965},
     PAGES = {471--495},
      ISSN = {0002-9947,1088-6850},
   MRCLASS = {55.20},
  MRNUMBER = {195085},
MRREVIEWER = {L.\ Neuwirth},
       DOI = {10.2307/1994281},
       URL = {https://doi.org/10.2307/1994281},
}

@article {kim:surfaces,
    AUTHOR = {Kim, Hee Jung},
     TITLE = {Modifying surfaces in 4-manifolds by twist spinning},
   JOURNAL = {Geom. Topol.},
  FJOURNAL = {Geometry and Topology},
    VOLUME = {10},
      YEAR = {2006},
     PAGES = {27--56},
      ISSN = {1465-3060,1364-0380},
   MRCLASS = {57R57 (14J80 57R40 57R65 57R95)},
  MRNUMBER = {2207789},
MRREVIEWER = {Hongzhu\ Gao},
       DOI = {10.2140/gt.2006.10.27},
       URL = {https://doi.org/10.2140/gt.2006.10.27},
}

@book {bass-morgan:smith,
     TITLE = {The {S}mith conjecture},
    SERIES = {Pure and Applied Mathematics},
    VOLUME = {112},
    EDITOR = {Morgan, John W. and Bass, Hyman},
      NOTE = {Papers presented at the symposium held at Columbia University,
              New York, 1979},
 PUBLISHER = {Academic Press, Inc., Orlando, FL},
      YEAR = {1984},
     PAGES = {xv+243},
      ISBN = {0-12-506980-4},
   MRCLASS = {57-06 (57N10 57S17 57S25)},
  MRNUMBER = {758459},
MRREVIEWER = {Allan\ Edmonds},
}

@misc{khukhro2023unsolved,
      title={Unsolved Problems in Group Theory. The {K}ourovka {N}otebook}, 
      author={E. I. Khukhro and V. D. Mazurov},
      year={2023},
      eprint={1401.0300},
      archivePrefix={arXiv},
      primaryClass={math.GR}
}

@article{boileau-leeb-porti:orbifolds,
    AUTHOR = {Boileau, Michel and Leeb, Bernhard and Porti, Joan},
     TITLE = {Geometrization of 3-dimensional orbifolds},
   JOURNAL = {Ann. of Math. (2)},
  FJOURNAL = {Annals of Mathematics. Second Series},
    VOLUME = {162},
      YEAR = {2005},
    NUMBER = {1},
     PAGES = {195--290},
      ISSN = {0003-486X},
     CODEN = {ANMAAH},
   MRCLASS = {57M50 (53C23 57N10)},
  MRNUMBER = {2178962 (2007f:57028)},
MRREVIEWER = {Darryl McCullough},
       DOI = {10.4007/annals.2005.162.195},
       URL = {http://dx.doi.org/10.4007/annals.2005.162.195},
}

@article{boileau-porti:cyclic,
    AUTHOR = {Boileau, Michel and Porti, Joan},
     TITLE = {Geometrization of 3-orbifolds of cyclic type},
      NOTE = {Appendix A by Michael Heusener and Porti},
   JOURNAL = {Ast\'erisque},
  FJOURNAL = {Ast\'erisque},
    volume  = {No. 272},
      YEAR = {2001},
     PAGES = {208 pages},
      ISSN = {0303-1179},
   MRCLASS = {57M50 (53C23 57M60 57N10)},
  MRNUMBER = {1844891 (2002f:57034)},
MRREVIEWER = {Kevin P. Scannell},
}

@book {cooper-hodgson-kerckhoff:orbifolds,
    AUTHOR = {Cooper, Daryl and Hodgson, Craig D. and Kerckhoff, Steven P.},
     TITLE = {Three-dimensional orbifolds and cone-manifolds},
    SERIES = {MSJ Memoirs},
    VOLUME = {5},
      NOTE = {With a postface by Sadayoshi Kojima},
 PUBLISHER = {Mathematical Society of Japan, Tokyo},
      YEAR = {2000},
     PAGES = {x+170},
      ISBN = {4-931469-05-1},
   MRCLASS = {57M50 (57N10)},
  MRNUMBER = {1778789 (2002c:57027)},
MRREVIEWER = {Danny C. Calegari},
}

@article {Wall1965,
    AUTHOR = {Wall, C. T. C.},
     TITLE = {Finiteness conditions for {${\rm CW}$}-complexes},
   JOURNAL = {Ann. of Math. (2)},
  FJOURNAL = {Annals of Mathematics. Second Series},
    VOLUME = {81},
      YEAR = {1965},
     PAGES = {56--69},
      ISSN = {0003-486X},
   MRCLASS = {55.40},
  MRNUMBER = {171284},
MRREVIEWER = {A.\ Dold},
       DOI = {10.2307/1970382},
       URL = {https://doi.org/10.2307/1970382},
}

\end{document}